\renewcommand{\Re}{\text{Re\,}}
\renewcommand{\Im}{\text{Im\,}}
\begin{document}
\newtheorem{theorem}{Theorem}
\newtheorem{lemma}[theorem]{Lemma}
\newtheorem{corollary}[theorem]{Corollary}
\newtheorem{definition}[theorem]{Definition}
\newtheorem{example}[theorem]{Example}
\pagenumbering{roman}
\renewcommand{\thetheorem}{\thesection.\arabic{theorem}}
\renewcommand{\thelemma}{\thesection.\arabic{lemma}}
\newenvironment{proof}{\noindent{\bf{Proof.\/}}}{\hfill$\blacksquare$\vskip0.1in}
\renewcommand{\thetable}{\thesection.\arabic{table}}
\renewcommand{\thedefinition}{\thesection.\arabic{definition}}
\renewcommand{\theexample}{\thesection.\arabic{example}}
\renewcommand{\theequation}{\thesection.\arabic{equation}}
\newcommand{\mysection}[1]{\section{#1}\setcounter{equation}{0}
\setcounter{theorem}{0} \setcounter{lemma}{0}
\setcounter{definition}{0}}
\newcommand{\mrm}{\mathrm}
\newcommand{\be}{\begin{equation}}
\newcommand{\ee}{\end{equation}}

\newcommand{\ben}{\begin{enumerate}}
\newcommand{\een}{\end{enumerate}}

\title
{\bf Unified  Compact Numerical Quadrature Formulas for Hadamard Finite Parts of Singular Integrals of Periodic Functions}

\author
{Avram Sidi\\
Computer Science Department\\
Technion - Israel Institute of Technology\\ Haifa 32000, Israel\\
E-mail:\quad  \url{asidi@cs.technion.ac.il}\\
URL:\quad    \url{http://www.cs.technion.ac.il/~asidi}}
%\date{~}
\bigskip\bigskip
\maketitle \thispagestyle{empty}
\newpage\noindent

\begin{abstract}
 We consider the numerical computation of   finite-range singular
  integrals
  $$I[f]=\intBar^b_a f(x)\,dx,\quad f(x)=\frac{g(x)}{(x-t)^m},\quad m=1,2,\ldots,\quad
  a<t<b,$$  that are defined in the sense of Hadamard Finite Part, assuming that $g\in C^\infty[a,b]$ and $f(x)\in C^\infty(\mathbb{R}_t)$ is $T$-periodic with
  $\mathbb{R}_t=\mathbb{R}\setminus\{t+ kT\}^\infty_{k=-\infty}$,
  $T=b-a$. Using a generalization of the Euler--Maclaurin expansion developed in [A. Sidi,
  {Euler--Maclaurin} expansions for integrals with arbitrary algebraic   endpoint singularities.
{\em Math. Comp.}, 81:2159--2173, 2012],  we unify the treatment of these integrals. For each $m$, we  develop a number of  numerical quadrature formulas
$\widehat{T}^{(s)}_{m,n}[f]$ of trapezoidal type for $I[f]$.
For example, three numerical quadrature formulas of trapezoidal type result from this approach for the case $m=3$, and these are
   \begin{align*} \widehat{T}^{(0)}_{3,n}[f]&=h\sum^{n-1}_{j=1}f(t+jh)-\frac{\pi^2}{3}\,g'(t)\,h^{-1}
   +\frac{1}{6}\,g'''(t)\,h, \quad h=\frac{T}{n},\\
 \widehat{T}^{(1)}_{3,n}[f]&=h\sum^n_{j=1}f(t+jh-h/2)-\pi^2\,g'(t)\,h^{-1},\quad h=\frac{T}{n},\\
 \widehat{T}^{(2)}_{3,n}[f]&=2h\sum^n_{j=1}f(t+jh-h/2)-
\frac{h}{2}\sum^{2n}_{j=1}f(t+jh/2-h/4),\quad h=\frac{T}{n}.\end{align*}
   For all $m$ and $s$, we   show that all of the numerical quadrature formulas  $\widehat{T}^{(s)}_{m,n}[f]$ have    spectral accuracy; that is,
  $$  \widehat{T}^{(s)}_{m,n}[f]-I[f]=o(n^{-\mu})\quad\text{as $n\to\infty$}\quad \forall \mu>0.$$
   We  provide a  numerical example involving a periodic integrand with $m=3$ that  confirms our convergence theory. We also show how the formulas $\widehat{T}{}^{(s)}_{3,n}[f]$ can be used
  in an efficient manner for solving supersingular integral  equations whose kernels have a $(x-t)^{-3}$ singularity. A similar approach can be applied  for all $m$.
 \end{abstract}

\vspace{1cm} \noindent {\bf Mathematics Subject Classification 2010:}
 41A55,  41A60, 45B05, 45E05, 65B15,  65D30, 65D32.

\vspace{1cm} \noindent {\bf Keywords and expressions:} Hadamard  Finite Part,
singular integrals, hypersingular integrals,
 supersingular integrals, generalized Euler--Maclaurin expansions, asymptotic expansions,  numerical quadrature, trapezoidal rule.

\thispagestyle{empty}
\newpage
\pagenumbering{arabic}

\section{Introduction and background} \label{se1}
  \setcounter{equation}{0} \setcounter{theorem}{0}

   In this work, we consider the efficient numerical computation of
    \be \label{eq1}
      I[f]=\intBar^b_a f(x)\,dx,\quad f(x)=\frac{g(x)}{(x-t)^m},
      \quad g\in C^\infty[a,b], \quad  m=1,2,\ldots,\quad
  a<t<b,\ee where
  \be \label{eq2}
  f(x)\ \ \text{is $T$-periodic},\quad
   f\in C^\infty(\mathbb{R}_t), \quad \mathbb{R}_t=\mathbb{R}\setminus\{t+kT\}^\infty_{-\infty},\quad  T=b-a.
  \ee
    Clearly, the integrals $\int^b_af(x)\,dx$  are {\em not} defined in the regular sense, but they {\em are} defined in the sense of {\em Hadamard Finite Part (HFP)},  the HFP of $\int^b_af(x)\,dx$ being  commonly denoted by $\intBar^b_a f(x)\,dx$.\footnote{\label{ft1}When $m=1$, the HFP of $\int^b_af(x)\,dx$ is also called its {\em Cauchy Principal Value (CPV)} and the accepted notation for it is $\intbar^b_a f(x)\,dx.$ When $m=2$,
  $\intBar^b_af(x)\,dx$ is called a {\em hypersingular integral,} and when $m=3$,
  $\intBar^b_af(x)\,dx$ is called a {\em supersingular integral.}\\
  We reserve the notation $\int^b_au(x)\,dx$ for integrals that exist in the regular sense.}

  By invoking a recent generalization of the Euler--Maclaurin (E--M) expansion developed in Sidi \cite[Theorem 2.3]{Sidi:2012:EME-P1} that also applies to both regular and  HFP integrals,
       we unify the treatments of the HFP integrals in \eqref{eq1}--\eqref{eq2} and derive a number of very effective numerical quadrature formulas for $I[f]$ for each $m\geq1$. In the process of derivation, we also obtain a result that shows that all the quadrature formulas derived here have {\em spectral} convergence. As examples, we provide the different quadrature formulas for the cases $m=1,2,3,4$  and illustrate the application  of those formulas with $m=3$ to a nontrivial numerical example.

  We note that the case $m=1$ was considered earlier in
  Sidi and Israeli \cite{Sidi:1988:QMP} and Sidi \cite{Sidi:2013:CNQ},
the technique used in \cite{Sidi:1988:QMP} being  different from that used in  \cite{Sidi:2013:CNQ}. The case $m=2$ was treated in \cite{Sidi:2013:CNQ}.
In  \cite{Sidi:2013:CNQ}, we also gave a detailed study of  the exactness and convergence properties of the numerical quadrature formulas for the cases with $m=1,2.$ In Sidi \cite{Sidi:2014:AES}, we considered further convergence properties of these formulas and,  in
 Sidi  \cite{Sidi:2014:RES},  we analyzed the numerical  stability issues related to the application of the Richardson extrapolation process to them. (For the  Richardson extrapolation process, see Sidi \cite[Chapters 1,2]{Sidi:2003:PEM}, for example.)

 For the definition and properties of
Hadamard Finite Part integrals, see the books by Davis and Rabinowitz \cite{Davis:1984:MNI}, Evans
\cite{Evans:1993:PNI},  Krommer and Ueberhuber \cite{Krommer:1998:CI}, and  Kythe and Sch{\"{a}}ferkotter \cite{Kythe:2005:HCM}, for
example. These integrals have most of the properties of regular integrals and some properties that are quite
unusual. For example, they are invariant with respect to translation, but they are not necessarily invariant
under a  scaling of the variable of integration, which is linear; therefore, they are not necessarily invariant
under a nonlinear   variable transformation either. Finally, $\intBar^b_a \phi(x)\,dx=\int^b_a \phi(x)\,dx$ when $\phi(x)$ is integrable over $[a,b]$ in the regular sense.
For more recent developments, see  the books  by Lifanov, Poltavskii, and Vainikko \cite{Lifanov:2004:HIE} and Ladopoulos \cite{Lifanov:2004:HIE}, for example. See also the papers by Kaya and Erdogan \cite{Kaya:1987:SIE},  Monegato  \cite{Monegato:1994:NEH}, \cite{Monegato:2009:DPA}. For an interesting  two-dimensional generalization, see
Lyness and Monegato \cite{Lyness:2005:AET}.

Cauchy principal value, hypersingular, and supersingular   integrals described in footnote$^{\ref{ft1}}$ arise in different branches of science and engineering,
such as fracture mechanics, elasticity, electromagnetic scattering, acoustics, and fluid mechanics, for example. They appear naturally in boundary integral equation formulations of boundary value problems in these disciplines.
  Periodic singular integrals arise naturally from Cauchy transforms
$\intBar_\Gamma \frac{w(\zeta)}{(\zeta-z)^m}\,d\zeta,$
where $\Gamma$ is an infinitely smooth  closed contour in the complex $z$-plane and
$z\in \Gamma$; we discuss this briefly in Section 5.

Various  numerical quadrature formulas for these integrals have been developed in several recent papers. Some of these papers, make use of  trapezoidal sums
or composite Simpson and Newton Cotes rules  with appropriate correction terms to account for the singularity at $x=t$; see Li and Sun \cite{Li:2010:NCR}, Li, Zhang, and Yu \cite{Li:2010:SUN},
Zeng, Li,  and  Huang \cite{Zeng:2014:NCQ}, and  Zhang, Wu, and Yu \cite{Zhang:2009:SCS}, for example.
The paper by  Huang, Wang, and Zhu \cite{Huang:2013:AEE} approaches the problem of computing HFP integrals of the form $\intBar^b_ag(x)/|x-t|^\beta\,dx$, (with the restriction $1<\beta\leq2$)
 by following  Sidi and Israeli \cite{Sidi:1988:QMP}, which is based on the generalizations  of the Euler--Maclaurin  expansion by Navot \cite{Navot:1961:EEM}, \cite{Navot:1962:FEE}.
 The papers   by Wu, Dai, and Zhang \cite{Wu:2010:SCM}  and
 by Wu and Sun \cite{Wu:2008:SNC} take similar  approaches. The approach of \cite{Sidi:2013:CNQ} is
  based on the most recent developments in Euler--Maclaurin expansions
 of \cite{Sidi:2012:EME-P1} that are valid for all HFP integrals even with possible {\em arbitrary} algebraic   endpoint singularities.

In the next section, we review   the author's generalization of the E--M expansion
for integrals  whose integrands are allowed to have arbitrary algebraic endpoint singularities. This generalization is given as Theorem \ref{th:1-2}.
In Section \ref{se3}, we apply Theorem \ref{th:1-2} to construct the
generalized E--M expansion
for $I[f]$  given in \eqref{eq1}--\eqref{eq2}.  In Section \ref{se4}, we develop a number of numerical quadrature formulas of trapezoidal type  for $I[f]$ with arbitrary $m$ and  analyze their convergence properties.
We also analyze their numerical stability in floating-point arithmetic.

When applied to the  HFP integrals $\intBar^b_a f(x)\,dx$ in \eqref{eq1}--\eqref{eq2}, all these  quadrature formulas possess the following favorable properties, which transpire from the developments in Sections \ref{se3} and \ref{se4}:
  \begin{enumerate}
  \item
  Unlike the quadrature formulas developed in the  papers mentioned above, they
  are {\em compact}  in that they consist of
  trapezoidal-like rules with very simple, yet sophisticated and unexpected,  ``correction'' terms to account for the singularity at $x=t$.
  \item They
  have a unified convergence theory that follows directly and very simply from the way they are derived.
  \item
  Unlike   the methods developed in  the papers mentioned above, which attain very limited accuracies, our methods
  enjoy {\em spectral} accuracy.
    \item
  Because they enjoy spectral accuracy, they are much more stable numerically than existing methods.
\end{enumerate}

In Section \ref{se5}, we apply the quadrature formulas for supersingular integrals ($m=3$) of Section \ref{se4} to a $T$-periodic $f(x)$ in $C^\infty(\mathbb{R}_t)$ and confirm numerically the convergence theory of Section \ref{se4}.
Finally, in Section \ref{se6}, we show how two of  these quadrature formulas, denoted  $\widehat{T}^{(0)}_{3,n}[\cdot]$ and  $\widehat{T}^{(2)}_{3,n}[\cdot]$,  can be used in the solution of supersingular integral equations.

Before proceeding to the next sections, we  would like to recall some of the properties of
the Riemann  Zeta function $\zeta(z)$ and the Bernoulli
 numbers $B_k$ and the connection between them for future reference:
\begin{gather}
B_0=1,\ \
B_1=-\frac{1}{2};\quad B_{2k+1}=0,\ \ B_{2k}\neq 0,\ \
k=1,2,\ldots,\nonumber \\
\zeta(0)=-\frac{1}{2};\quad \zeta(-2k)=0, \ \
\zeta(1-2k)=-\frac{B_{2k}}{2k}\neq 0,\ \ k=1,2,\ldots, \label{eq:2-4}\\ \zeta(2k)=(-1)^{k+1}\frac{(2\pi)^{2k}}{2(2k)!}B_{2k},\ \
k=1,2,\ldots \notag.
\end{gather}
For all these and much more, see Olver et al. \cite[Chapters 24, 25]{Olver:2010:NIST} or Luke \cite[Chapter 2]{Luke:1969:SFA1}, for example. See also Sidi \cite[Appendices D, E]{Sidi:2003:PEM}.

  \section{Generalization of the Euler--Maclaurin expansion to
  integrals with arbitrary algebraic  endpoint singularities}\label{se2}
   \setcounter{equation}{0} \setcounter{theorem}{0}
  The following theorem concerning the generalization of the E--M expansion to integrals with {\em arbitrary} algebraic endpoint singularities was published recently by Sidi  \cite[Theorem 2.3]{Sidi:2012:EME-P1}. It serves as the main analytical tool for all the developments in this paper.

\begin{theorem}\label{th:1-2}
Let $u\in C^{\infty}(a,b)$, and assume that $u(x)$ has the
asymptotic expansions
\be\label{eq37}\begin{split}
&u(x)\sim K(x-a)^{-1}+\sum^{\infty}_{s=0}c_s\,(x-a)^{\gamma_s}
\quad \text{as}\   x\to a+,\\
&u(x)\sim L(b-x)^{-1}+\sum^{\infty}_{s=0}d_s\,(b-x)^{\delta_s}
\quad \text{as}\  x\to b-,
\end{split}\ee
where the $\gamma_s$ and
$\delta_s$  are distinct complex numbers that satisfy
\begin{equation}\label{eq38}
\begin{matrix}
&\gamma_s\neq -1\quad \forall  s;\quad \text{\em Re\,}\gamma_0\leq\text{\em Re\,}\gamma_1\leq\text{\em Re\,}\gamma_2\leq\cdots;
&\lim_{s\to\infty}\text{\em Re\,}\gamma_s=+\infty,\\ \\
&\delta_s\neq -1\quad \forall s;\quad \text{\em Re\,}\delta_0\leq\text{\em Re\,}\delta_1\leq\text{\em Re\,}\delta_2\leq\cdots;
&\lim_{s\to\infty}\text{\em Re\,}\delta_s=+\infty.
\end{matrix}
\end{equation}
Assume furthermore that, for each positive integer $k$,
$u^{(k)}(x)$ has asymptotic expansions as $x\to a+$ and
$x\to b-$ that are obtained by differentiating those of
$u(x)$  term by term $k$ times.\footnote{We express this briefly by saying that
``the asymptotic expansions in \eqref{eq37} can be differentiated infinitely  many times.''}
Let also $h=(b-a)/n$ for  $n=1,2,\ldots\ .$ Then, as $h\to0$,
\begin{align}
h\sum^{n-1}_{j=1}u(a+jh)\sim\intBar^b_au(x)\,dx&+ K(C-\log h)+
\sum^{\infty}_{\substack{s=0\\ \gamma_s\not\in\{2,4,6,\ldots\}}}
c_s\,\zeta(-\gamma_s)\,h^{\gamma_s+1} \notag\\
&+L(C-\log h)+\sum^{\infty}_{\substack{s=0\\ \delta_s\not\in\{2,4,6,\ldots\}}}
d_s\,\zeta(-\delta_s)\,h^{\delta_s+1}, \label{eq39}
\end{align}
where $C=0.577\cdots$ is Euler's constant.\footnote{Note that the  constants $K$ and/or $L$ in \eqref{eq37} hence in  \eqref{eq39} can be zero.}
\end{theorem}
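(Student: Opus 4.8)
The plan is to localize the analysis at each endpoint and reduce everything to a single \emph{prototype} computation in which the zeta function is produced. Since both the finite-part integral and the trapezoidal sum are translation invariant, we may place the origin at $a$; and since the two endpoints enter symmetrically under the reflection $x\mapsto a+b-x$, which carries the node set $\{a+jh\}_{j=1}^{n-1}$ onto itself, it suffices to treat the singularity at $a$ and then add the reflected contribution, which furnishes the $L$ and $d_s$ terms. To decouple the two ends cleanly I would fix a $C^\infty$ cutoff $\chi$ with $\chi\equiv1$ on a neighbourhood of $a$ and $\chi\equiv0$ on a neighbourhood of $b$, and write $u=\chi u+(1-\chi)u$. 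The first summand is singular only at $a$ and vanishes identically near $b$, so it generates no spurious Euler--Maclaurin corrections at $b$; this is the entire point of inserting $\chi$.

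Next I would strip off the singular behaviour at $a$. Choosing $p$ large and setting
$$w_p(x)=\chi(x)\Big[K(x-a)^{-1}+\sum_{s=0}^{p}c_s\,(x-a)^{\gamma_s}\Big],\qquad R=\chi u-w_p,$$
the hypothesis that \eqref{eq37} may be differentiated term by term guarantees $R^{(k)}(x)=O\big((x-a)^{\gamma_{p+1}-k}\big)$ as $x\to a+$, so that $R\in C^{N}[a,b]$ with $R^{(k)}(a)=0$ for all $k<\Re\gamma_{p+1}$, while also $R\equiv0$ near $b$. For such a remainder the \emph{classical} Euler--Maclaurin formula applies with \emph{all} boundary terms vanishing, giving $h\sum_{j=1}^{n-1}R(a+jh)=\int_a^bR\,dx+O(h^{N})$; choosing $p$ with $\Re\gamma_{p+1}$ as large as we please pushes this error past any prescribed order. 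Thus everything reduces to the atomic terms $\chi(x)(x-a)^{\gamma}$ (for $\gamma=\gamma_s$ and for $\gamma=-1$), and by linearity of the finite part the corresponding integrals reassemble into $\intBar_a^b u\,dx$.

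The heart of the matter is the prototype expansion for $\Psi_\gamma(x):=\chi(x)(x-a)^{\gamma}$. Writing $a=0$ and $M[\chi](z)=\int_0^\infty\chi(x)x^{z-1}\,dx$, I would insert the Mellin inversion of $\Psi_\gamma(jh)$ and sum over $j$ (the upper limit extends to $\infty$ at no cost, as $\chi(jh)=0$ beyond the support), obtaining for $\Re z$ large
$$h\sum_{j=1}^{\infty}\Psi_\gamma(jh)=\frac{1}{2\pi i}\int_{(c)}M[\chi](z+\gamma)\,\zeta(z)\,h^{1-z}\,dz,$$
the sum over $j$ producing $\zeta(z)$. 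Shifting the contour leftward and collecting residues generates the claimed expansion: the simple pole of $\zeta$ at $z=1$ yields $M[\chi](1+\gamma)=\intBar_0^\infty\chi(x)x^\gamma\,dx$ (the integral being its analytic continuation, that is, its Hadamard finite part, when $\Re\gamma\le-1$); the simple pole of $M[\chi](z+\gamma)$ at $z=-\gamma$, with residue $\chi(0)=1$, yields exactly $\zeta(-\gamma)\,h^{\gamma+1}$; and sweeping the contour arbitrarily far left leaves a remainder $O(h^{1+N})$, which is the source of the spectral accuracy. Because $\chi$ is flat at $0$, $M[\chi](z+\gamma)$ has no further poles, so no other powers of $h$ intrude. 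The exclusion of $\gamma\in\{2,4,6,\dots\}$ is now transparent: there $\zeta(-\gamma)=0$ and the correction simply drops out. The one genuinely different case is $\gamma=-1$ (the $K$ term): the poles at $z=1$ and $z=-\gamma$ coalesce into a double pole, and differentiating $h^{1-z}$ at $z=1$ produces the $-\log h$, while the Laurent constants of $\zeta$ and of $M[\chi]$ there assemble Euler's constant $C$, giving $K(C-\log h)$.

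The step I expect to require the most care is this prototype computation, where two points must be handled rigorously. First, the leftward contour shift needs the standard polynomial growth of $\zeta(z)$ in vertical strips together with the rapid decay of $M[\chi](z+\gamma)$ in $|\Im z|$ (valid since $\chi$ is smooth and vanishes near the far endpoint), so that the horizontal connectors disappear and the far vertical integral is genuinely $O(h^{1+N})$. Second, one must identify the divergent model integral with its Hadamard finite part through analytic continuation in $\gamma$, which is precisely what makes $\intBar_a^b u\,dx$, rather than some regularized surrogate, appear in the limit. A completely elementary alternative to the Mellin step, avoiding these estimates, is to expand $\sum_{j=1}^{n-1}j^\gamma$ by the ordinary Euler--Maclaurin formula and to recognise the resulting additive constant as the continuation $\zeta(-\gamma)$, in the spirit of Navot \cite{Navot:1961:EEM,Navot:1962:FEE}; I would fall back on this should the contour estimates prove delicate for complex $\gamma$ of large imaginary part.
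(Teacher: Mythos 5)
You should flag at the outset that the paper contains no proof of Theorem~\ref{th:1-2}: it is imported verbatim from \cite[Theorem 2.3]{Sidi:2012:EME-P1}, so the only possible comparison is with that source. There, the proof shares your outer architecture---subtract finitely many terms of the endpoint expansions and treat the smooth remainder by the classical Euler--Maclaurin formula---but the prototype expansions for the model terms are obtained by extending Navot's expansion \cite{Navot:1961:EEM} from $\Re\gamma>-1$ to all $\gamma\neq-1$ by analytic continuation in $\gamma$ (the $(x-a)^{-1}$ model supplying $C-\log h$), and no cutoff is needed because each global model term $(x-a)^{\gamma_s}$ is $C^\infty$ at $b$, its contribution there recombining with the $\delta$-expansion. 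Your primary route is genuinely different at exactly this point: the cutoff $\chi$ decouples the endpoints, and the Mellin--Barnes representation with the leftward contour shift produces the finite-part integral $M[\chi](1+\gamma)=\intBar_0^\infty\chi(x)x^\gamma\,dx$, the correction $\zeta(-\gamma)\,h^{\gamma+1}$, and $K(C-\log h)$ as residues, with the spectral-order remainder coming from the far-left contour. That is a sound, self-contained derivation (your reduction of the remainder $R$ and the reflection argument for the endpoint $b$ are both correct), and it handles complex exponents uniformly; what it costs is the analytic overhead you yourself identify (growth of $\zeta$ and decay of $M[\chi]$ in vertical strips, and the identification of analytic continuation with the Hadamard finite part). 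Your ``elementary fallback'' via $\sum_{j=1}^{n-1}j^\gamma$ is, in substance, the route of the cited source.

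One computation in your sketch needs repair before it yields the stated constant. At the double pole ($\gamma=-1$), write $M[\chi](z-1)=(z-1)^{-1}+A+O(z-1)$ and $\zeta(z)=(z-1)^{-1}+C+O(z-1)$; then
\[
\mathrm{Res}_{z=1}\,\bigl[M[\chi](z-1)\,\zeta(z)\,h^{1-z}\bigr]=A+C-\log h .
\]
Here $A$ is \emph{not} part of Euler's constant: $A=\intBar_0^\infty\chi(x)x^{-1}\,dx$, i.e., precisely the share of the $K(x-a)^{-1}$ term in $\intBar_a^b u(x)\,dx$. Your sentence ``the Laurent constants of $\zeta$ and of $M[\chi]$ there assemble Euler's constant $C$'' would either mis-produce the constant or double-count this finite part, since for $\gamma=-1$ the finite-part integral can no longer come from evaluating $M[\chi](1+\gamma)$ (that value is now infinite) and must come from $A$. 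With the bookkeeping set straight---$\zeta$'s constant term gives $C$, $M[\chi]$'s constant term gives the finite-part integral, and differentiating $h^{1-z}$ gives $-\log h$---your residue computation reproduces \eqref{eq39} exactly.
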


\noindent{\bf Remarks:}
\begin{enumerate}
\item
Note that if $K+L=0$ and $\Re\gamma_0>-1$ and $\Re\delta_0>-1$, then $\int^b_a u(x)\,dx$ exists as a regular integral; otherwise, it does not, but  its HFP does.
\item \label{re22}
When $u\in C^\infty[a,b]$, the Taylor series of $u(x)$ at $x=a$ and at $x=b$, whether convergent or divergent, are also (i)\,asymptotic expansions of $u(x)$ as $x\to a+$ and as $x\to b-$, respectively, and (ii)\,can be differentiated term-by-term any number of times.
Thus, Theorem \ref{th:1-2} applies without further assumptions on $u(x)$ when $u\in C^\infty[a,b]$.
\item
When $u\in C^\infty(a,b)$, the E--M expansion is completely determined by the asymptotic expansions of $u(x)$ as $x\to a+$ and as $x\to b-$, nothing  else being needed. What happens in $(a,b)$ is immaterial.
\item
It is clear from \eqref{eq39}  that the positive even integer powers of
$(x-a)$ and  $(b-x)$,
if present in the asymptotic expansions of $u(x)$ as
$x\to a+$ and $x\to b-$,
do not contribute to the asymptotic expansion of
$h\sum^{n-1}_{j=1}u(a+jh)$ as $h\to 0$, the reason being that
$\zeta(-2k)=0$ for $k=1,2,\ldots,$ by \eqref{eq:2-4}. We have included the ``limitations''
$\gamma_s\not\in\{2,4,6,\ldots\}$ and $\delta_s\not\in\{2,4,6,\ldots\}$ in the sums on the right-hand side of \eqref{eq39} only as ``reminders.''

\item Theorem \ref{th:1-2} is only a special case of a more general theorem
in \cite{Sidi:2012:EME-P1} involving the so-called ``offset trapezoidal rule''
$h\sum^{n-1}_{i=0}f(a+jh+\theta h)$, with $\theta\in[0,1]$ fixed,\footnote{Note that, with $\theta=1/2$, the offset trapezoidal rule becomes the mid-point rule.}
    that contains as special cases  all previously known generalizations of the E--M expansions
    for integrals with algebraic endpoint singularities. For  a further generalization
pertaining to arbitrary algebraic-logarithmic endpoint singularities, see
    Sidi \cite{Sidi:2012:EME-P2}.
\end{enumerate}

\section{Generalized  Euler--Maclaurin expansion for \\ $\intBar^b_ag(x)/(x-t)^m\,dx,\ m=1,2,\ldots$} \label{se3}
  \setcounter{equation}{0} \setcounter{theorem}{0}

  We now  present the derivation of the generalized E--M expansion  for the HFP integral  $I[f]$ in \eqref{eq1}--\eqref{eq2}.  As already mentioned,   our starting point and main   analytical  tool is Theorem \ref{th:1-2}.
Before we begin, we would like to mention that this has already been discussed in \cite{Sidi:2013:CNQ}, separately for even $m$ and odd $m$ and  using an indirect approach. Our approach here unifies the treatments for all $m$, is direct,  and  is much simpler than that in \cite{Sidi:2013:CNQ}.

First, we claim that, because $f(x)$ is $T$-periodic, with $T=b-a$, we can express $I[f]$ in \eqref{eq1} as
\be \label{eqint} I[f]=\intBar^{t+T}_t f(x)\,dx.\ee
As we are dealing with HFP integrals that are {\em not} defined in the regular sense, this
claim needs to be justified rigorously. For this, we need to recall some of the properties of HFP integrals we mentioned in Section \ref{se1}.
We begin by noting that
\be \label{eqintw} I[f]=\intBar^t_a f(x)\,dx+\intBar^b_t f(x)\,dx,\ee because HFP integrals are invariant with respect to the union of integration intervals. Next, we recall that HFP integrals are invariant under a translation of the interval of integration; therefore, under the variable transformation $y=x+T$, which is only a translation of the interval $[a,t]$ to $[b,t+T]$,  there holds
\be \label{eqintx}\intBar^t_a f(x)\,dx=\intBar^{t+T}_{a+T} f(y-T)\,dy=\intBar^{t+T}_{b} f(x-T)\,dx.\ee
Finally, by $T$-periodicity of $f(x)$, we have $f(x-T)=f(x)$, hence
\be \label{eqinty}\intBar^{t+T}_{b} f(x-T)\,dx=\intBar^{t+T}_{b} f(x)\,dx.\ee
The claim in \eqref{eqint} is now justified by combining
\eqref{eqintx} and \eqref{eqinty} in \eqref{eqintw}, thus obtaining
$$ I[f]=\intBar^{b}_{t} f(x)\,dx+\intBar^{t+T}_{b} f(x)\,dx
=\intBar^{t+T}_t f(x)\,dx.$$

With \eqref{eqint} justified, we now show that Theorem \ref{th:1-2} can be applied  {\em as is} to the integral
$\intBar^{t+T}_t f(x)\,dx$ {\em instead of} the integral $\intBar^{b}_a f(x)\,dx$. Of course, for this,  we need
to show that (i)\,$f(x)$ is infinitely differentiable on the interval $(t,t+T)$ and (ii)\,$f(x)$, as $x\to t+$ and as $x\to (t+T)-$, has  asymptotic expansions of
 the forms shown in Theorem \ref{th:1-2}. In doing so, we need to remember that neither $g(x)$ nor $(x-t)^{-m}$ is $T$-periodic even though $f(x)$ is.  The details follow.

 \begin{itemize}
\item
By the fact that $f\in C^\infty(\mathbb{R}_t)$  and  by $T$-periodicity
of $f(x)$, it is clear that $f\in C^\infty(t,t+T)$, with  singularities only at $x=t$ and $x=t+T$.
\item{\em  Asymptotic expansion of $f(x)$  as $x\to t+$:}\\
Expanding  $g(x)$ in a Taylor series at $x=t$, we obtain
$$ f(x)\sim \sum^\infty_{i=0}\frac{g^{(i)}(t)}{i!}\, (x-t)^{i-m} \quad \text{as $x\to t$},$$
which we write in the form
\be\label{eqf1} f(x)\sim\frac{g^{(m-1)}(t)}{(m-1)!}(x-t)^{-1}
+\sum^\infty_{\substack{i=0\\ i\neq m-1}}\frac{g^{(i)}(t)}{i!}\, (x-t)^{i-m} \quad \text{as $x\to t+$}.
\ee
\item {\em Asymptotic expansion of $f(x)$ as $x\to (t+T)-$:}\\
We first note that
 $$f(x)=f(x-T)=\frac{g(x-T)}{(x-T-t)^m}\quad \text{by $T$-periodicity of $f(x)$}.$$ Next,  expanding $g(x-T)$ in a Taylor series  at $x=t+T$, we obtain
$$ f(x)\sim \sum^\infty_{i=0}\frac{g^{(i)}(t)}{i!}\, (x-t-T)^{i-m} \quad \text{as $x\to (t+T)$},$$ which we write in the form
\begin{multline}\label{eqf2}
f(x)\sim -\frac{g^{(m-1)}(t)}{(m-1)!}(t+T-x)^{-1}\\
+\sum^\infty_{\substack{i=0\\ i\neq m-1}}(-1)^{i-m}\frac{g^{(i)}(t)}{i!}\, (t+T-x)^{i-m} \quad \text{as $x\to (t+T)-$}.
\end{multline}
\end{itemize}
Note that here we have recalled Remark \ref{re22}  concerning Taylor series expansions
 following the statement of Theorem \ref{th:1-2}.

Clearly,   Theorem \ref{th:1-2} applies with $a=t$ and $b=t+T$, and
$$ K=-L=\frac{g^{(m-1)}(t)}{(m-1)!}, \quad
\gamma_s=\delta_s=\begin{cases}s-m,&0\leq s\leq m-2\\ s+1-m, & s\geq m-1\end{cases}, $$
and
$$ c_s=\begin{cases}g^{(s)}(t)/s!,& 0\leq s\leq m-2\\
g^{(s+1)}(t)/(s+1)!,& s\geq m-1\end{cases}, \quad
d_s=\begin{cases}(-1)^{s-m}c_s,& 0\leq s\leq m-2,\\ (-1)^{s+1-m}c_s,&
s\geq m-1\end{cases}.$$
Letting  $h=T/n$,  and noting that the terms $K(x-t)^{-1}$ and $L(t+T-x)^{-1}$ in the  asymptotic expansions of $f(x)$ given in \eqref{eqf1} and \eqref{eqf2} make contributions that cancel each other for all $m$, we thus have the  asymptotic expansion
\be \label{eqEM1} h\sum_{j=1}^{n-1} f(t+jh)\sim I[f] +
\sum^\infty_{\substack{i=0\\ i\neq m-1}}[1+(-1)^{i-m}]
\frac{g^{(i)}(t)}{i!}\,\zeta(-i+m)\,h^{i-m+1} \quad \text{as $h\to0$}.\ee
Now, this asymptotic expansion assumes different forms depending on whether $m$ is even or odd. We actually have the following result:

\begin{theorem}\label{th11}
With $f(x)$ as in \eqref{eq1}--\eqref{eq2} and
\be\label{eqTtilde}\widetilde{T}_{m,n}[f]=h\sum^{n-1}_{j=1}f(t+jh),\quad h=T/n,\ee
 the following hold:
\begin{enumerate}
\item For $m$ even, $m=2r$, $r=1,2,\ldots,$
\be \label{evenm}\widetilde{T}_{2r,n}[f]=I[f]+
2\sum^r_{i=0}\frac{g^{(2i)}(t)}{(2i)!}\,\zeta(2r-2i)\,h^{-2r+2i+1}
+o(h^\mu)
\quad \text{as $n\to\infty$}\quad \forall \mu>0.\ee
\item For $m$ odd, $m=2r+1$, $r=0,1,\ldots,$
\be \label{oddm}\widetilde{T}_{2r+1,n}[f]=I[f]+
2\sum^r_{i=0}\frac{g^{(2i+1)}(t)}{(2i+1)!}\,\zeta(2r-2i)\,h^{-2r+2i+1}
+o(h^\mu)
\quad \text{as $n\to\infty$}\quad \forall \mu>0.\ee
\end{enumerate}
\end{theorem}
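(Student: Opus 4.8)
The plan is to derive both parts directly from the single asymptotic expansion \eqref{eqEM1}, since the even and odd cases differ only in which of its terms survive. The first step is to examine the parity factor $1+(-1)^{i-m}$ multiplying each term: it equals $2$ when $i-m$ is even, i.e.\ when $i$ and $m$ share the same parity, and it equals $0$ otherwise. Thus every term whose index $i$ has parity opposite to that of $m$ drops out automatically, and the infinite sum in \eqref{eqEM1} reduces to a sum over indices $i$ congruent to $m$ modulo $2$. I note in passing that the excluded index $i=m-1$ always has parity opposite to $m$, so the constraint $i\neq m-1$ is vacuous among the surviving terms and needs no separate attention.

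Next I would reindex the surviving terms. For $m=2r$ even, the surviving indices are the even ones, $i=2\ell$, and \eqref{eqEM1} becomes
\[
h\sum_{j=1}^{n-1} f(t+jh)\sim I[f]+2\sum_{\ell=0}^\infty \frac{g^{(2\ell)}(t)}{(2\ell)!}\,\zeta(2r-2\ell)\,h^{2\ell-2r+1}.
\]
For $m=2r+1$ odd, the surviving indices are the odd ones, $i=2\ell+1$; the same substitution turns the exponent $i-m+1$ into $2\ell-2r+1$ and the zeta argument $m-i$ into $2r-2\ell$, giving
\[
h\sum_{j=1}^{n-1} f(t+jh)\sim I[f]+2\sum_{\ell=0}^\infty \frac{g^{(2\ell+1)}(t)}{(2\ell+1)!}\,\zeta(2r-2\ell)\,h^{2\ell-2r+1}.
\]
In both cases the generic term carries the Riemann zeta value $\zeta(2r-2\ell)$, which is the feature I would exploit next.

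The decisive observation is that $\zeta(2r-2\ell)$ vanishes whenever $\ell>r$: then $2r-2\ell$ is a negative even integer, and by \eqref{eq:2-4} we have $\zeta(-2k)=0$ for $k=1,2,\ldots$. Consequently every term with $\ell>r$ in the two displays above is identically zero, while the terms with $0\leq\ell\leq r$ carry the nonzero values $\zeta(2r-2\ell)$ (including $\zeta(0)=-1/2$ at $\ell=r$). The only remaining point is to translate the asymptotic relation \emph{$\sim$} into the explicit remainder estimate claimed in \eqref{evenm} and \eqref{oddm}. By the definition of an asymptotic expansion, subtracting the partial sum through any index $\ell=N\geq r$ leaves a remainder that is $o(h^{2N-2r+1})$; but since all terms with $\ell>r$ are zero, this partial sum coincides with the finite sum through $\ell=r$, so that same fixed finite sum leaves a remainder $o(h^{2N-2r+1})$ for \emph{every} $N\geq r$. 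Letting $N\to\infty$ forces the error to be $o(h^\mu)$ for every $\mu>0$, which is exactly the spectral-accuracy statement.

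I expect the reindexing and the zeta-vanishing to be entirely routine; the one step deserving care is the last one, namely arguing rigorously that a genuinely infinite asymptotic series whose tail happens to vanish term by term yields a remainder beyond every algebraic order. The subtlety is that \emph{$\sim$} is a statement about partial sums, not about convergence of the series, so the argument must proceed through the partial sums and invoke the vanishing of the tail to identify them with the displayed finite sum, rather than claiming the series \emph{truncates} in any convergent sense.
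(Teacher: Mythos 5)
Your proposal is correct and follows essentially the same route as the paper: both start from the expansion \eqref{eqEM1}, use the parity factor $1+(-1)^{i-m}$ to kill half the terms, invoke $\zeta(-2k)=0$ to annihilate every surviving term with index beyond $r$, and then conclude spectral accuracy because the tail of the asymptotic expansion is empty. Your final step---passing through partial sums of the asymptotic expansion and letting $N\to\infty$---is just a more explicitly rigorous rendering of the paper's remark that no powers of $h$ beyond $h^{-2r+1},\ldots,h^{1}$ occur, so nothing is missing or different in substance.
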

\begin{proof}
We consider the cases of even and odd $m$ separately.
\begin{enumerate}
\item
For $m=2r$, $r=1,2,\ldots,$ we have that only terms with even $i$ contribute to the infinite sum $\sum^\infty_{\substack{i=0\\ i\neq m-1}}$ in \eqref{eqEM1}, which reduces to
\be\label{eqty1} 2\sum^\infty_{i=0}\frac{g^{(2i)}(t)}{(2i)!}\,\zeta(2r-2i)\,h^{-2r+2i+1}.
\ee

\item
For $m=2r+1$, $r=0,1,\ldots,$ we have that only terms with odd $i$ contribute to the infinite sum $\sum^\infty_{\substack{i=0\\ i\neq m-1}}$ in \eqref{eqEM1}, which reduces to
\be\label{eqty2}
2\sum^\infty_{i=0}\frac{g^{(2i+1)}(t)}{(2i+1)!}\,\zeta(2r-2i)\,h^{-2r+2i+1}.
\ee
\end{enumerate}
Recalling that $\zeta(-2k)=0$ for $k=1,2, \ldots,$ we realize that all the terms with $i>r$ in the  two sums in \eqref{eqty1} and \eqref{eqty2} actually vanish.
This, of course, does {\em not} necessarily mean  that

$$\widetilde{T}_{2r,n}[f]=I[f]+
2\sum^r_{i=0}\frac{g^{(2i)}(t)}{(2i)!}\,\zeta(2r-2i)\,h^{-2r+2i+1},\quad r=1,2,\ldots,$$
$$\widetilde{T}_{2r+1,n}[f]=I[f]+
2\sum^r_{i=0}\frac{g^{(2i+1)}(t)}{(2i+!)!}\,\zeta(2r-2i)\,h^{-2r+2i+1},\quad r=0,1,\ldots.$$
Since there are no powers of $h$ in addition to $h^{-2r+1},h^{-2r+3},\ldots,h^{-1},h^1$ that are already present, a remainder term of order $o(h^\mu)$ for every $\mu>0$ is present on the right-hand side of each of these ``equalities.'' This completes the proof.
\end{proof}

As can be seen from \eqref{evenm} and \eqref{oddm},  the finite sums involving
$g(t)$ and its  derivatives are completely known  provided $g(x)$ and its  derivatives are known or can be computed, since $\zeta(0),\zeta(2),\ldots,\zeta(2r)$ are known from \eqref{eq:2-4}.
In the next section, we derive numerical quadrature formulas that rely on  (i)\,all of the $g^{(k)}(t)$, (ii)\,some of the $g^{(k)}(t)$, and (iii)\,none of the $g^{(k)}(t)$.

\section{Compact numerical quadrature formulas}
\label{se4}
\setcounter{equation}{0} \setcounter{theorem}{0}
\subsection{Development of numerical quadrature formulas}
Theorem \ref{th11} can be used to design numerical quadrature formulas in different ways. The first ones are obtained directly from \eqref{evenm} and \eqref{oddm}, and they read

\begin{align}\label{eqthat0even}
\widehat{T}^{(0)}_{m,n}[f]&=\widetilde{T}_{m,n}[f]-
2\sum^r_{i=0}\frac{g^{(2i)}(t)}{(2i)!}\,\zeta(2r-2i)\,h^{-2r+2i+1},\ \ m=2r,\ \ r=1,2,\ldots,\\
\label{eqthat0odd}
\widehat{T}^{(0)}_{m,n}[f]&=\widetilde{T}_{m,n}[f]-
2\sum^r_{i=0}\frac{g^{(2i+1)}(t)}{(2i+1)!}\,\zeta(2r-2i)\,h^{-2r+2i+1},
\ \ m=2r+1,\ \ r=0,1,\ldots.\end{align}

Clearly, $g(x)$ and  derivatives of $g(x)$ that are present in the asymptotic expansions of Theorem \ref{th11} are  an essential part of the formulas $\widehat{T}^{(0)}_{m,n}[f]$.
Numerical quadrature formulas that use less of this   information can be developed by applying a number of steps of a ``Richardson-like extrapolation'' process to the sequence
$\widehat{T}^{(0)}_{m,n}[f], \widehat{T}^{(0)}_{m,2n}[f], \widehat{T}^{(0)}_{m,4n}[f],\ldots,$ thereby eliminating
the powers of $h$ in the order $h^1,h^{-1},h^{-3},\ldots.$\footnote{Recall that, when applying the Richardson extrapolation process, we would eliminate the powers of $h$ in the order $h^{-2r+1},h^{-2r+3},\ldots,h^{-3},h^{-1},h^1$.}
For $m=1,2,3,4$, for example, we obtain the following quadrature formulas via this process:

\begin{enumerate}
\item
{\em The case $m=1$}:
\begin{align}
\widehat{T}^{(0)}_{1,n}[f]&=h\sum^{n-1}_{j=1}f(t+jh)+g'(t)h \label{eqT10}\\
\widehat{T}^{(1)}_{1,n}[f]&=h\sum^{n}_{j=1}f(t+jh-h/2) \label{eqT11} \end{align}
\item
{\em The case $m=2$}:
\begin{align}
\widehat{T}^{(0)}_{2,n}[f]&=h\sum^{n-1}_{j=1}f(t+jh)-\frac{\pi^2}{3}g(t)h^{-1}+
\frac{1}{2}g''(t)h \label{eqT20} \\
\widehat{T}^{(1)}_{2,n}[f]&=h\sum^{n}_{j=1}f(t+jh-h/2)-{\pi^2}g(t)h^{-1} \label{eqT21}\\
\widehat{T}^{(2)}_{2,n}[f]&=2h\sum^{n}_{j=1}f(t+jh-h/2)
-\frac{h}{2}\sum^{2n}_{j=1}f(t+jh/2-h/4) \label{eqT22}
\end{align}
\item
{\em The case $m=3$}:
\begin{align}
\widehat{T}^{(0)}_{3,n}[f]&=h\sum^{n-1}_{j=1}f(t+jh)-\frac{\pi^2}{3}g'(t)h^{-1}+
\frac{1}{6}g'''(t)h \label{eqT30} \\
\widehat{T}^{(1)}_{3,n}[f]&=h\sum^{n}_{j=1}f(t+jh-h/2)-{\pi^2}g'(t)h^{-1} \label{eqT31} \\
\widehat{T}^{(2)}_{3,n}[f]&=2h\sum^{n}_{j=1}f(t+jh-h/2)
-\frac{h}{2}\sum^{2n}_{j=1}f(t+jh/2-h/4) \label{eqT32}  \end{align}

\item
{\em The case $m=4$}:
\begin{align}
\widehat{T}^{(0)}_{4,n}[f]&=h\sum^{n-1}_{j=1}f(t+jh)-\frac{\pi^4}{45}g(t)h^{-3}
-\frac{\pi^2}{6}g''(t)h^{-1}+\frac{1}{24}g^{(4)}(t)h \label{eqT40} \\
\widehat{T}^{(1)}_{4,n}[f]&=h\sum^{n}_{j=1}f(t+jh-h/2)-\frac{\pi^4}{3}g(t)h^{-3}
-\frac{\pi^2}{2}g''(t)h^{-1} \label{eqT41} \\
\widehat{T}^{(2)}_{4,n}[f]&=2h\sum^{n}_{j=1}f(t+jh-h/2)
-\frac{h}{2}\sum^{2n}_{j=1}f(t+jh/2-h/4)+2\pi^4g(t)h^{-3} \label{eqT42}  \\
\widehat{T}^{(3)}_{4,n}[f]&=\frac{16h}{7}\sum^{n}_{j=1}f(t+jh-h/2)
-\frac{5h}{7}\sum^{2n}_{j=1}f(t+jh/2-h/4)\notag \\
&\hspace{4.5cm}+\frac{h}{28}\sum^{4n}_{j=1}f(t+jh/4-h/8) \label{eqT43}
\end{align}
\end{enumerate}

Each of the quadrature formulas  $\widehat{T}^{(s)}_{m,n}[f]$ above is obtained by performing $s$ steps of ``Richardson-like extrapolation'' on the sequence $\{\widehat{T}^{(0)}_{m,2^kn}[f]\}^s_{k=0}$.
Indeed, for $s=1$ (eliminating only the power $h^1$),  for $s=2$ (eliminating only the powers $h^1,h^{-1}$),  and for $s=3$ (eliminating only the powers $h^1,h^{-1},h^{-3}$), we have, respectively,
$$\widehat{T}^{(1)}_{m,n}[f]=2\widehat{T}^{(0)}_{m,2n}[f]-\widehat{T}^{(0)}_{m,n}[f],$$
\begin{align*}
\widehat{T}^{(2)}_{m,n}[f]&=2\widehat{T}^{(1)}_{m,n}[f]-\widehat{T}^{(1)}_{m,2n}[f]\\
&=-2\widehat{T}^{(0)}_{m,n}[f]+5\widehat{T}^{(0)}_{m,2n}[f]-2\widehat{T}^{(0)}_{m,4n}[f],
\end{align*}
and
\begin{align*} \widehat{T}^{(3)}_{m,n}[f]&=\frac{8}{7}\widehat{T}^{(2)}_{m,n}[f]-\frac{1}{7}\widehat{T}^{(2)}_{m,2n}[f]\\
&= \frac{16}{7}\widehat{T}^{(1)}_{m,n}[f]-\frac{10}{7}\widehat{T}^{(1)}_{m,2n}[f]+
\frac{2}{7}\widehat{T}^{(1)}_{m,4n}[f]\\
&=-\frac{16}{7}\widehat{T}^{(0)}_{m,n}[f]+6\widehat{T}^{(0)}_{m,2n}[f]
-3\widehat{T}^{(0)}_{m,4n}[f]+\frac{2}{7}\widehat{T}^{(0)}_{m,8n}[f].\end{align*}
In general, eliminating only the powers $h^1,h^{-1},h^{-3},\ldots, h^{-2s+3},$ we have
\be\label{eqalpha} \widehat{T}^{(s)}_{m,n}[f]=\sum^s_{k=0}\alpha^{(s)}_{m,k} \widehat{T}^{(0)}_{m,2^kn}[f],\quad
\sum^s_{k=0}\alpha^{(s)}_{m,k}=1;\quad \text{$\alpha^{(s)}_{m,k}$ independent of $n$}.\ee

\noindent{\bf Remarks:} \begin{enumerate}
\item
The quadrature formulas  $\widehat{T}^{(1)}_{1,n}[f]$ and $\widehat{T}^{(1)}_{2,n}[f]$  were derived and studied in \cite{Sidi:1988:QMP} and \cite{Sidi:2013:CNQ}, respectively.
\item
 In case $g^{(k)}(t)$, $k=1,2,\ldots,$  are not known or cannot be computed exactly, we can replace them wherever they are present in \eqref{eqT10}--\eqref{eqT42} by suitable approximations  based on the already computed (i)\,$g(t+jh)$ in case of
$\widehat{T}^{(0)}_{m,n}[f]$, and (ii)\,$g(t+jh-h/2)$ in case of $\widehat{T}^{(1)}_{m,n}[f]$, for example.
We can use differentiation formulas based on finite differences as approximations, for example. Of course, the error expansions of the quadrature formulas will now have additional powers of $h$ that result from the differentiation formulas used.
(For another approach that uses trigonometric interpolation and also  preserves spectral accuracy, see subsection \ref{sse63}.)
\item
In case $g(x)$ is not known, which happens when $f(x)$ is given as a black box, for example, or in case we do not wish to approximate the different $g^{(k)}(x)$,  quadrature formulas that do not involve $g(x)$ become very useful. The formulas $\widehat{T}^{(1)}_{1,n}[f]$ in \eqref{eqT11}, $\widehat{T}^{(2)}_{2,n}[f]$ in \eqref{eqT22},  $\widehat{T}^{(2)}_{3,n}[f]$ in \eqref{eqT32},
and $\widehat{T}^{(3)}_{4,n}[f]$ in \eqref{eqT43}  do not involve $g(x)$.
\end{enumerate}

\subsection{General convergence theorem}
We now state a  convergence theorem concerning all the quadrature formulas $\widehat{T}^{(s)}_{m,n}[f]$ defined in \eqref{eqalpha} in general, and those
in \eqref{eqT10}--\eqref{eqT42} in particular. This theorem
  results from the developments above, especially from the fact that the asymptotic expansions of $\widehat{T}^{(0)}_{m,n}[f]-I[f]$ as $h\to0$ are all empty:

  \begin{theorem}\label{th13} Let $f(x)$ be as in \eqref{eq1}--\eqref{eq2},
  and let the numerical quadrature formulas $\widehat{T}^{(s)}_{m,n}[f]$ be as defined above. Then $\lim_{n\to\infty}\widehat{T}^{(s)}_{m,n}[f]=I[f]$, and we have
 \be\label{eq34}
  \widehat{T}^{(s)}_{m,n}[f]-I[f]=o(n^{-\mu})\quad\text{as $n\to\infty$}\quad \forall \mu>0.\ee
In words, the errors in the $\widehat{T}^{(s)}_{m,n}[f]$ tend to zero as $n\to\infty$ faster than
 {\em every} negative power of $n$.
  \end{theorem}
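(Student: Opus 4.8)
The plan is to reduce everything to the base case $s=0$, which is already essentially contained in Theorem \ref{th11}, and then propagate the conclusion through the linear combination \eqref{eqalpha}. First I would treat $\widehat{T}^{(0)}_{m,n}[f]$. By the definitions \eqref{eqthat0even}--\eqref{eqthat0odd}, the correction term subtracted from $\widetilde{T}_{m,n}[f]$ is \emph{exactly} the finite sum appearing in \eqref{evenm}/\eqref{oddm}. Hence, subtracting it from both sides of the asymptotic identity of Theorem \ref{th11} annihilates every explicit power of $h$ and leaves
$$\widehat{T}^{(0)}_{m,n}[f]-I[f]=o(h^\mu)\quad\text{as $h\to0$}\quad\forall\mu>0.$$
Since $h=T/n$, this is precisely $\widehat{T}^{(0)}_{m,n}[f]-I[f]=o(n^{-\mu})$ for every $\mu>0$, establishing the case $s=0$. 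The conceptual content---that the hand-crafted correction terms exhaust the entire generalized Euler--Maclaurin expansion, so that the remainder is spectrally small---is therefore already supplied by Section \ref{se3}.

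Next I would handle general $s$ by exploiting the representation \eqref{eqalpha} together with the normalization $\sum_{k=0}^s\alpha^{(s)}_{m,k}=1$. Writing $I[f]=\big(\sum_{k=0}^s\alpha^{(s)}_{m,k}\big)\,I[f]$ and subtracting, I obtain
$$\widehat{T}^{(s)}_{m,n}[f]-I[f]=\sum_{k=0}^s\alpha^{(s)}_{m,k}\Big(\widehat{T}^{(0)}_{m,2^kn}[f]-I[f]\Big).$$
Each summand is, by the $s=0$ case applied with $n$ replaced by $2^kn$, of order $o\big((2^kn)^{-\mu}\big)$ as $n\to\infty$; and because $2^k$ is a fixed constant for each $k$, we have $o\big((2^kn)^{-\mu}\big)=o(n^{-\mu})$.

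Finally, since $\{0,1,\dots,s\}$ is a finite index set and the coefficients $\alpha^{(s)}_{m,k}$ are constants independent of $n$, the right-hand side is a finite linear combination, with fixed coefficients, of quantities each of which is $o(n^{-\mu})$. Such a combination is itself $o(n^{-\mu})$, and this holds simultaneously for every $\mu>0$. This yields \eqref{eq34} and, in particular, $\lim_{n\to\infty}\widehat{T}^{(s)}_{m,n}[f]=I[f]$.

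I do not expect a genuine analytic obstacle: once Theorem \ref{th11} is in hand, the argument is essentially bookkeeping. The only point requiring care is the passage from the $h\to0$ asymptotics to $n\to\infty$ rates, and---more delicately---the observation that the linear-combination step preserves the spectral rate \emph{uniformly} in $\mu$. This works precisely because the index set is finite and the coefficients are $n$-independent, so no issue of a $\mu$-dependent or $n$-dependent constant blowing up can arise.
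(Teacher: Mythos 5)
Your proposal is correct and follows essentially the same route as the paper's own proof: first establish the $s=0$ case by noting that the correction terms in \eqref{eqthat0even}--\eqref{eqthat0odd} exactly cancel the finite sums in Theorem \ref{th11}, leaving an $o(n^{-\mu})$ remainder, and then propagate this through the finite linear combination \eqref{eqalpha} with $n$-independent coefficients. Your explicit use of the normalization $\sum_{k=0}^s\alpha^{(s)}_{m,k}=1$ and the remark that $o\bigl((2^kn)^{-\mu}\bigr)=o(n^{-\mu})$ merely spell out details the paper leaves implicit.
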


\begin{proof}
We begin by  observing that, by \eqref{eqTtilde}, \eqref{evenm}--\eqref{oddm}, and
\eqref{eqthat0even}--\eqref{eqthat0odd}, there holds
\be \label{eqT0c} \widehat{T}^{(0)}_{m,n}[f]-I[f]=o(n^{-\mu})\quad\text{as $n\to\infty$}\quad \forall \mu>0,\ee
that is, \eqref{eq34} is true for $s=0$.
Next  by \eqref{eqalpha},
$$ \widehat{T}^{(s)}_{m,n}[f]-I[f]=\sum^s_{k=0}\alpha^{(s)}_{m,k} (\widehat{T}^{(0)}_{m,2^kn}[f]-I[f]).$$
Letting $n\to\infty$ and invoking \eqref{eqT0c}, the result in \eqref{eq34}  follows.
\end{proof}

\noindent{\bf Remarks:} \begin{enumerate}
\item
In the nomenclature  of the common literature, the quadrature formulas
$\widehat{T}^{(s)}_{m,n}[f]$ have {\em spectral} accuracy.
Thus,  $\widehat{T}^{(s)}_{m,n}[f]$  are excellent numerical quadrature formulas for computing $I[f]$ when $f(x)$ is
 infinitely differentiable and $T$-periodic on $\mathbb{R}_t$, with $\mathbb{R}_t$ as defined in \eqref{eq2}. This should be compared with most existing quadrature formulas based on trapezoidal sums, which  have errors that behave at  best like $O(n^{-\nu})$ for some low value of $\nu>0$.
\item
In case $f(z)$, the analytic continuation of $f(x)$ to the complex $z$-plane, is analytic in the strip $|\Im z|<\sigma$, the result of Theorem \ref{th13} can be improved optimally at least for $m=1,2,3.$ We now have that  the errors $\widehat{T}^{(s)}_{m,n}[f]-I[f]$, for every $s$,
tend to zero as $n\to\infty$ like $e^{-2n\pi \sigma/T}$ for all practical purposes,
as shown in \cite{Sidi:1988:QMP} for $m=1$, in \cite{Sidi:2013:CNQ} for $m=2$, and in
\cite{Sidi:2019:SSI-P2} for $m=3$.
\end{enumerate}

\subsection{Analysis of the  $\widehat{T}^{(s)}_{m,n}[f]$ in floating-point arithmetic}
\label{sse43}
Due to the fact that the integrand $f(x)$ tends to infinity as $x\to t$, the quadrature formulas $\widehat{T}^{(s)}_{m,n}[f]$ are likely to present some stability issues when applied in
floating-point (or finite-precision) arithmetic.
Before proceeding further, we would like to address this issue in some detail. We will study $\widehat{T}^{(0)}_{3,n}[f]$ only; the studies of   $\widehat{T}^{(s)}_{m,n}[f]$ with general $m$ and $s$  are similar and so are the conclusions derived from them.

Let us denote the numerically computed $\widehat{T}^{(0)}_{3,n}[f]$ by $\bar{T}^{(0)}_{3,n}[f]$. Then the true numerical  error is $(\bar{T}^{(0)}_{3,n}[f]-I[f])$, and we can rewrite it as
$$\bar{T}^{(0)}_{3,n}[f]-I[f]=\big(\bar{T}^{(0)}_{3,n}[f]-\widehat{T}^{(0)}_{3,n}[f]\big)+ \big(\widehat{T}^{(0)}_{3,n}[f]-I[f]\big), $$ and we can bound it as in
$$\big|\bar{T}^{(0)}_{3,n}[f]-I[f]\big|\leq\big|\bar{T}^{(0)}_{3,n}[f]-\widehat{T}^{(0)}_{3,n}[f]\big|+ \big|\widehat{T}^{(0)}_{3,n}[f]-I[f]\big|.$$
Clearly, the theoretical error  $(\widehat{T}{}^{(0)}_{3,n}[f]-I[f])$  tends to zero faster than any negative power of $n$ by Theorem \ref{th13}.  Therefore, we need to analyze
$(\bar{T}^{(0)}_{3,n}[f]-\widehat{T}^{(0)}_{3,n}[f])$, which is the source of
numerical instability.

For all practical purposes, it is clear from \eqref{eqT30} that the stability issue arises as a result of errors committed in computing $g(x)$ and its derivatives in the interval $[a,b]$
 because $f(x)$ is given and computed on the interval $[a,b]$ and $f(x)=f(x-T)$ for $x\in[b,b+T]$ since $f(x)$ is $T$-periodic.\footnote{Note that even a small error committed when computing  $g(x)$ is magnified by the denominator $(x-t)^3$ when $x$ is close to $t$.} Thus, with the integer $r$ being  such that $t+rh\leq b<t+(r+1)h$,  the sum $\sum^{n-1}_{j=1}f(t+jh)$ in \eqref{eqT30} is actually computed as
$$\sum^{r}_{j=1}f(t+jh)+\sum^{n-1}_{j=r+1}f(t+jh-T)=
\sum^{r}_{j=1}f(t+jh)+\sum_{j=-n+r+1}^{-1}f(t+jh)=
\sum^{r}_{\substack{j=-n+r+1 \\ j\neq0}}f(t+jh).$$
 We are assuming that the rest of the computations are being carried out with no errors.

 Now,  the computed $g(x)$, which we shall denote by $\bar{g}(x)$, is given as $\bar{g}(x)=g(x)[1+\eta(x)]$, where $\eta(x)$ is the relative error in $\bar{g}(x)$. Thus, letting $y_j=t+jh$, we have
$$\bar{T}^{(0)}_{3,n}[f]-\widehat{T}^{(0)}_{3,n}[f]=h\sum^{r}_{\substack{j=-n+r+1 \\ j\neq0}}\frac{g(y_j)\eta(y_j)}{(y_j-t)^3}
-\frac{\pi^2}{3}g'(t)\eta_1(t)h^{-1}+\frac{1}{6}g'''(t)\eta_3(t)h,$$
where we have denoted by $\eta_1(t)$ and $\eta_3(t)$ the relative errors in the computed $g'(t)$ and $g'''(t)$, respectively. Assuming that $g(x)$, $g'(x)$, and $g'''(x)$
are being computed with maximum precision allowed by the floating-point arithmetic being used, we have $\big|\eta(y_j)\big|\leq {\bf u}$, $\big|\eta_1(t)\big|\leq {\bf u}$, and $\big|\eta_3(t)\big|\leq {\bf u}$, where ${\bf u}$ is the roundoff unit of this arithmetic. Therefore,
\begin{align*} \big|\bar{T}^{(0)}_{3,n}[f]-\widehat{T}{}^{(0)}_{3,n}[f]\big|&\leq \|g\|{\bf u} h^{-2}
\sum^{r}_{\substack{j=-n+r+1 \\ j\neq0}}\frac{1}{|j|^3}+\frac{\pi^2}{3}\|g'\|{\bf u}h^{-1}+
\frac{1}{6}\|g'''\|{\bf u}h,\quad \\
&\leq \bigg(2\zeta(3)\|g\|+\frac{\pi^2}{3}\|g'\| h+\frac{1}{6}\|g'''\| h^3\bigg){\bf u}h^{-2}\\
&\leq K(n){\bf u}n^2,\quad  K(n)=\frac{2\zeta(3)}{T^2}\|g\|+\frac{\pi^2}{3Tn}\|g'\|
 +\frac{T}{6n^{3}}\|g'''\| .\end{align*}
Here $\|w\|=\max_{a\leq x\leq b}\big|w(x)\big|$ and $\zeta(3)=\sum^\infty_{k=1}k^{-3}$.

The conclusion from this is that $(\bar{T}^{(0)}_{3,n}[f]-\widehat{T}^{(0)}_{3,n}[f])$
 will dominate the true error $(\bar{T}^{(0)}_{3,n}[f]-I[f])$ for large $n$, depending on the size of {\bf u} (equivalently, whether we are using single- or double- or quadruple-precision arithmetic).  Fortunately, substantial accuracy will have been achieved by $\bar{T}^{(0)}_{3,n}[f]$ before $n$ becomes large since $(\widehat{T}^{(0)}_{3,n}[f]-I[f])$  tends to zero faster than $n^{-\mu}$ for {\em every} $\mu>0$. Tables \ref{ta0}--\ref{ta2} that result from the numerical example in the next section  amply substantiate this conclusion.

Finally, we would like to note that the abscissas of the formulas $\widehat{T}^{(s)}_{3,n}[f]$ will never be arbitrarily close to the point of singularity $x=t$; the smallest distance from this point is $h$, $h/2$, and $h/4$
for $s=0,1,2,$ respectively. This is not the case for most known formulas.

\section{A numerical example}\label{se5}
\setcounter{equation}{0} \setcounter{theorem}{0}
\setcounter{table}{0}
We can apply the quadrature formulas  $\widehat{T}^{(s)}_{m,n}$ we have derived to supersingular integrals   $ I[f]=\intBar^b_a f(x)\,dx$,  where $f(x)$ is $T$-periodic,  $T=b-a$, and is of the form
$$f(x)=\theta_m(x-t)u(x),\quad \theta_m(y)=\begin{cases}\displaystyle
\frac{\cos\frac{\pi y}{T}}{\sin^{2r-1}\frac{\pi y}{T}},\quad m=2r-1,\\
\displaystyle
\frac{1}{\sin^{2r}\frac{\pi y}{T}},\quad\quad m=2r, \end{cases}\quad r=1,2,\ldots.$$
 Such integrals arise from Cauchy transforms on the unit circle
$$J_m[w]=\intBar_\Gamma \frac{w(\zeta)}{(\zeta-z)^m}\,d\zeta,\quad z\in \Gamma=\{\zeta: |\zeta|=1\},\quad m=1,2,\ldots.$$
 Actually,
making the substitution $\zeta=e^{\mrm{i}x}$, $0\leq x\leq 2\pi$, so that $T=2\pi$, and letting $t\in[0,2\pi]$ be such that $z=e^{\mrm{i}t}$,  $J_m[w]$  becomes
$$ J_m[w]=\frac{\mrm{i}e^{\mrm{i}(1-m)t}}{(2\mrm{i})^m}\intBar^{2\pi}_0
\frac{\exp[\mrm{i}(2-m)\frac{x-t}{2}]}{\sin^m\frac{x-t}{2}} w(e^{\mrm{i}x})\,dx.
$$
After some manipulation, it can be shown that
$$ J_1[w]=\frac{1}{2}\intBar^{2\pi}_0\bigg(\frac{\cos\frac{x-t}{2}}{\sin\frac{x-t}{2}}+\mrm{i}\bigg)
w(e^{\mrm{i}x})\,dx,$$
$$J_2[w]=-\frac{\mrm{i}e^{-\mrm{i}t}}{4}\intBar^{2\pi}_0\frac{1}{\sin^2\frac{x-t}{2}}\,
w(e^{\mrm{i}x})\,dx,$$
$$J_3[w]=-\frac{e^{-\mrm{i}2t}}{8}\intBar^{2\pi}_0\bigg(\frac{\cos\frac{x-t}{2}}{\sin^3\frac{x-t}{2}}-
\mrm{i}\frac{1}{\sin^2\frac{x-t}{2}}\bigg)w(e^{\mrm{i}x})\,dx.$$
$$J_4[w]=\frac{\mrm{i}e^{-\mrm{i}3t}}{16}\intBar^{2\pi}_0\bigg(\frac{1}{\sin^4\frac{x-t}{2}}
-2\mrm{i}\frac{\cos\frac{x-t}{2}}{\sin^3\frac{x-t}{2}}
-2\frac{1}{\sin^2\frac{x-t}{2}}\bigg)w(e^{\mrm{i}x})\,dx.$$
For all $m\geq2$, we have
$$J_m[w]=\frac{\mrm{i}e^{\mrm{i}(1-m)t}}{(2\mrm{i})^m}\intBar^{2\pi}_0
\bigg[\sum^m_{k=2}\alpha_{m,k}\theta_k(x-t)\bigg]
w(e^{\mrm{i}x})\,dx,\quad\text{for some constants $\alpha_{m,k}$.}$$

We have applied  the quadrature formulas $\widehat{T}^{(s)}_{3,n}$ to supersingular integrals
$ I[f]=\intBar^b_a f(x)\,dx$,  where $f(x)$ is $T$-periodic and of the form
\be \label{eqex1} f(x)= \frac{\cos\frac{\pi(x-t)}{T}}{\sin^3\frac{\pi(x-t)}{T}}\,u(x),\quad
u(x)\ \ \text{$T$-periodic};\quad T=b-a. \ee
In order to approximate such integrals via the formulas $\widehat{T}{}^{(0)}_{3,n}[f]$, $\widehat{T}{}^{(1)}_{3,n}[f]$, and $\widehat{T}{}^{(2)}_{3,n}[f]$, we need to determine the  quantities $g'(t)$ and $g'''(t)$.  Now, $g(x)=(x-t)^3f(x)$ can be expressed as
$$g(x)=\bigg(\frac{T}{\pi}\bigg)^3 \frac{z^3\cos z}{\sin^3 z} u(x),\quad
z=\frac{\pi(x-t)}{T}.$$
Upon expanding in powers of $z$, we obtain
$$ \frac{z^3\cos z}{\sin^3 z}=1+O(z^4)\quad \text{as $z\to0$},$$
and, therefore,
\be \label{eqex2} g^{(i)}(t)=\bigg(\frac{T}{\pi}\bigg)^3u^{(i)}(t),\quad i=0,1,2,3.\ee

Unfortunately, we are not  aware of the existence of tables of supersingular periodic integrals when $f(x)$ is given as in \eqref{eqex1}. Therefore, we need to {\em construct} a simple but nontrivial periodic $u(x)$ for which $I[f]$ is given analytically and can  easily be computed. This is what we do next.

We apply the three quadrature formulas developed in Section \ref{se4}, with $T=2\pi$,  to
\be \label{eq555}I[f]=\intBar^\pi_{-\pi} f(x)\,dx, \quad f(x)=\frac{\cos\frac{x-t}{2}}{\sin^3\frac{x-t}{2}}\,u(x),\ee
with
\be \label{eq556} u(x)=\sum^\infty_{m=0}\eta^m \cos mx=\frac{1-\eta\cos x}
{1-2\eta\cos x +\eta^2}, \quad \text{$\eta$ real,} \quad \big|\eta\big|<1,\ee
which follows from
$$ u(x)=\Re \sum^\infty_{m=0}\eta^m e^{\mrm{i}mx}=
\Re\frac{1}{1-\eta e^{\mrm{i}x}}.$$
Clearly, $u(x)$ is $2\pi$-periodic, and so is $f(x)$. In addition, $u(x)$ is analytic in the strip $\big|\text{Im}\,z\big|<\sigma=\log \eta^{-1}.$

To obtain  an analytical expression for $I[f]$, we proceed as follows:

By the fact that $u(x)=\tfrac{1}{2}\sum^\infty_{m=0}\eta^m(e^{\mrm{i}mx}+e^{-\mrm{i}mx})$
and   by
$$ \intBar^\pi_{-\pi}\frac{\cos\frac{x-t}{2}}{\sin^3\frac{x-t}{2}}e^{\mrm{i}mx}\,dx=
-\text{sgn}(m)\,\mrm{i}4\pi m^2 e^{\mrm{i}mt},\quad m=0,\pm1,\pm2,\ldots,$$
which follows from Theorem 2.2  in Sidi \cite{Sidi:2019:SSI-P2}, we have
\begin{align}\label{eq557}I[f]&=
\frac{1}{2}\sum^\infty_{m=0}\eta^m(-\mrm{i}4\pi m^2 e^{\mrm{i}mt}+\mrm{i}4\pi m^2 e^{-\mrm{i}mt}) \notag\\
&=4\pi \Im \bigg[\bigg(\eta\frac{\partial}{\partial\eta}\bigg)^2\sum^\infty_{m=0}\eta^me^{\mrm{i}mt}\bigg] \notag\\
&= 4\pi \Im \bigg(\eta\frac{\partial}{\partial\eta}\bigg)^2\frac{1}{1-\eta e^{\mrm{i}t}} \notag\\
&=4\pi \Im \frac{\eta e^{\mrm{i}t}(1+\eta e^{\mrm{i}t})}{(1-\eta e^{\mrm{i}t})^3}.\end{align}

We have applied $\widehat{T}{}^{(s)}_{3,n}[f]$ with $t=1$ and $\eta=0.1(0.1)0.5.$ The results of this computation, using quadruple-precision arithmetic for which
${\bf u}=1.93\times 10^{-34}$ (approximately 34 decimal digits),
are given in Tables \ref{ta0}--\ref{ta2}.

\begin{table}[htb]
$$
\begin{array}{||r|c|c|c|c|c||} \hline
n&{E}_{n}(\eta=0.1)&{E}_{n}(\eta=0.2)&{E}_{n}(\eta=0.3)&{E}_{n}(\eta=0.4)&{E}_{n}(\eta=0.5)\\
\hline\hline
 10&  2.91D-10&  5.83D-07&  3.61D-05&  1.70D-04&  8.68D-03\\
   20&  1.87D-20&  2.19D-14&  4.69D-11&  1.07D-07&  2.10D-05\\
    30&  1.33D-30&  2.35D-21&  1.72D-15&  2.07D-11&  2.61D-08\\
    40&  1.30D-30&  6.34D-28&  1.54D-20&  2.46D-15&  2.27D-11\\
   50&  5.61D-30&  6.06D-30&  9.29D-26&  2.06D-19&  1.24D-14\\
    60&  9.19D-32&  7.74D-32&  8.14D-31&  9.19D-24&  1.39D-18\\
    70&  1.40D-29&  1.42D-29&  1.51D-29&  6.35D-28&  1.41D-20\\
    80&  2.21D-29&  2.16D-29&  2.21D-29&  2.21D-29&  2.17D-23\\
    90&  5.90D-29&  6.20D-29&  6.41D-29&  6.30D-29&  2.22D-26\\
   100&  1.04D-30&  1.73D-30&  2.83D-30&  6.98D-31&  1.81D-29\\
\hline
  \end{array}
$$
 \caption{\label{ta0} Numerical results for the integral in \eqref{eq555}--\eqref{eq557}
 with $t=1$ throughout.
 Here $E_n(\eta=c)=\big|\widehat{T}{}^{(0)}_{3,n}[f]-I[f]\big|$ for $\eta=c$. }
\end{table}

\begin{table}[htb]
$$
\begin{array}{||r|c|c|c|c|c||} \hline
n&{E}_{n}(\eta=0.1)&{E}_{n}(\eta=0.2)&{E}_{n}(\eta=0.3)&{E}_{n}(\eta=0.4)&{E}_{n}(\eta=0.5)\\
\hline\hline
   10&  2.91D-10&  5.83D-07&  3.61D-05&  1.70D-04&  8.72D-03\\
    20&  1.87D-20&  2.19D-14&  4.69D-11&  1.07D-07&  2.10D-05\\
    30&  7.80D-31&  2.35D-21&  1.72D-15&  2.07D-11&  2.61D-08\\
    40&  3.75D-29&  6.72D-28&  1.54D-20&  2.46D-15&  2.27D-11\\
    50&  3.34D-30&  2.64D-30&  9.29D-26&  2.06D-19&  1.24D-14\\
   60&  5.20D-30&  5.45D-30&  4.14D-30&  9.19D-24&  1.39D-18\\
    70&  1.20D-28&  1.21D-28&  1.28D-28&  5.28D-28&  1.41D-20\\
    80&  2.28D-29&  1.19D-29&  2.56D-29&  3.07D-29&  2.17D-23\\
    90&  1.13D-27&  1.18D-27&  1.18D-27&  1.17D-27&  2.33D-26\\
   100&  5.96D-28&  6.18D-28&  6.17D-28&  6.20D-28&  5.79D-28\\
\hline
  \end{array}
$$
 \caption{\label{ta1} Numerical results for the integral in \eqref{eq555}--\eqref{eq557}
 with $t=1$ throughout.
 Here $E_n(\eta=c)=\big|\widehat{T}{}^{(1)}_{3,n}[f]-I[f]\big|$ for $\eta=c$. }
\end{table}

\begin{table}[htb]
$$
\begin{array}{||r|c|c|c|c|c||} \hline
n&{E}_{n}(\eta=0.1)&{E}_{n}(\eta=0.2)&{E}_{n}(\eta=0.3)&{E}_{n}(\eta=0.4)&{E}_{n}(\eta=0.5)\\
\hline\hline
 10&  5.83D-10&  1.17D-06&  7.22D-05&  3.40D-04&  1.75D-02\\
  20&  3.73D-20&  4.38D-14&  9.37D-11&  2.14D-07&  4.19D-05\\
  30&  3.64D-30&  4.69D-21&  3.45D-15&  4.13D-11&  5.21D-08\\
  40&  9.78D-29&  1.36D-27&  3.09D-20&  4.93D-15&  4.54D-11\\
  50&  6.02D-28&  6.24D-28&  1.86D-25&  4.12D-19&  2.48D-14\\
  60&  1.59D-27&  1.65D-27&  1.67D-27&  1.84D-23&  2.77D-18\\
  70&  2.56D-28&  2.21D-28&  2.06D-28&  1.07D-27&  2.81D-20\\
  80&  3.83D-29&  1.32D-28&  9.14D-29&  1.19D-28&  4.35D-23\\
  90&  6.75D-27&  7.02D-27&  7.14D-27&  6.99D-27&  3.78D-26\\
 100&  1.44D-27&  1.47D-27&  1.47D-27&  1.49D-27&  1.37D-27\\
\hline
  \end{array}
$$
 \caption{\label{ta2} Numerical results for the integral in \eqref{eq555}--\eqref{eq557}
 with $t=1$ throughout.
 Here $E_n(\eta=c)=\big|\widehat{T}{}^{(2)}_{3,n}[f]-I[f]\big|$ for $\eta=c$. }
\end{table}

Judging from Tables \ref{ta0}--\ref{ta2}, we may conclude that,  all three quadrature formulas  $\widehat{T}{}^{(s)}_{3,n}[f]$ produce approximately the same accuracies. Actually, as shown in Sidi \cite[Theorem 5.2]{Sidi:2019:SSI-P2},
$E^{(s)}_n(\eta)=\big|\widehat{T}{}^{(s)}_{3,n}[f]-I[f]\big|=O(\eta^n)$ as $n\to\infty$  for  all three formulas; that is, all three formulas converge at the same rate as $n\to\infty$. The numerical results in Tables \ref{ta0}--\ref{ta2} are in agreement with this theoretical result as can be checked easily.

In subsection \ref{sse43}, we analyzed the true error in $\bar{T}^{(0)}_{3,n}[f]$, the computed  $\widehat{T}{}^{(0)}_{3,n}[f]$,   and concluded that
  $$\big|\bar{T}^{(0)}_{3,n}[f]-I[f]\big|
  \leq K(n){\bf u}n^2+o(n^{-\mu})\quad\text{as $n\to\infty$} \quad\forall \mu>0,$$
  with $K(n)$ bounded for all large $n$.
That is, the accuracy of $\bar{T}^{(0)}_{3,n}[f]$ increases quickly (and exponentially)  like $\eta^n$ up to a certain point where the term $K(n){\bf u}n^2$ increases to the point where it prevents
$\bar{T}^{(0)}_{3,n}[f]$ from  picking  up more correct significant digits. This takes place after
$\bar{T}^{(0)}_{3,n}[f]$ has achieved a very good accuracy in floating-point arithmetic, allowed by the size of ${\bf u}$.
 The numerical results in Tables \ref{ta0}--\ref{ta2}  demonstrate the validity of this argument amply.

\section{Application  to numerical solution of periodic  \\ supersingular integral equations}\label{se6}
\setcounter{equation}{0} \setcounter{theorem}{0}
\subsection{Preliminaries}

We now consider the application of the quadrature formulas $\widehat{T}^{(s)}_{3,n}$ to the numerical solution
of supersingular integral equations of the form
\be\label{eq4a} \lambda \phi(t)+\intBar^b_a K(t,x)\phi(x)\,dx =w(t),\quad t\in(a,b),\quad \lambda \ \text{scalar},\ee
such that, with  $T$, $\mathbb{R}$, and $\mathbb{R}_t$  as in \eqref{eq2}, and the following hold in addition:

\begin{enumerate}
\item
$K(t,x)$ is $T$-periodic in both $x$ and $t$, and is in $C^\infty(\mathbb{R}_t)$
 as a function of $x$, and is of the form
\be\label{eq5a}K(t,x)=\frac{U(t,x)}{(x-t)^3},\quad  U\in C^\infty([a,b]\times [a,b]).\ee
That is, as a function of $x$, $K(t,x)$ has  poles of order 3
at the points $x=t+kT$, $k=0,\pm1,\pm2,\ldots.$
\item
$w(t)$ is $T$-periodic in $t$ and is in $C^\infty(\mathbb{R})$.
\item
The solution $\phi(x)$ is
$T$-periodic in $x$ and is  in $C^\infty(\mathbb{R})$. (That $\phi\in C^\infty(\mathbb{R})$ under the conditions imposed on $K(t,x)$ and $w(t)$ can be argued heuristically, as was done in \cite[Introduction]{Sidi:1988:QMP}.)
\end{enumerate}

In some cases,  additional conditions are imposed on the solution to ensure uniqueness, which we will skip below. We now turn to the development of  numerical methods for solving
 \eqref{eq4a}.

\subsection{The ``simple'' approach}
Noting that   the quadrature formula $\widehat{T}{}^{(2)}_{3,n}[f]$ uses only function values $f(x_j)$, and no derivatives of $g(x)$, it is clearly very convenient to use, and we try this quadrature formula first.

Since $h$, $h/2$, and $h/4$ all feature in $\widehat{T}{}^{(2)}_{3,n}$, we proceed as follows:
For a given integer $n$, let
$\widehat{h} =  T/(4n)$, and $x_j = a +j\widehat{h} $, $j = 0,1,\ldots,4n,\ldots.$
Then $x_{4n}=b$ and $h=4\widehat{h}$ in $\widehat{T}{}^{(2)}_{3,n}$. Let $t$ be any one of the $x_j$, say $t=x_i$, $i\in\{1,2,\ldots,4n\}$, and
 approximate the integral $\intBar^b_a K(x_i, x) \phi(x)\, dx$ by the rule $\widehat{T}{}^{(2)}_{3,n}$, namely,
\begin{align*} \widehat{T}{}^{(2)}_{3,n}[K(x_i,\cdot)\phi]=2&\cdot 4\widehat{h} \sum^n_{j=1}K(x_i,x_i+4j\widehat{h}-2\widehat{h})\phi(x_i+4j\widehat{h}-2\widehat{h})\\
&-2\widehat{h} \sum^{2n}_{j=1} K(x_i,x_i+2j\widehat{h}-\widehat{h})\phi(x_i+2j\widehat{h}-\widehat{h}).\end{align*}
Finally,  noting that, for $k\leq 4n$,
$$f(x_i+k\widehat{h})=f(a+(i+k)\widehat{h})=\begin{cases} f(x_{i+k})&\quad\text{if $i+k\leq 4n$}\\
f(x_{i+k-4n})&\quad \text{if $i+k> 4n$}\end{cases}$$
when $f(x)$ is $T$-periodic,
and replacing the $\phi(x_j)$  by corresponding  approximations $\widehat{\phi}_j$,
and  recalling that everything here is $T$-periodic, [for example,
 $\phi(x_{j+4n})=\phi(x_j+T)=\phi(x_j)$ for all $j$,
and the same holds true for $K(t,x)$ and $w(x)$],
 we write down the following set of $4n$   equations for the $4n$ unknown  $\widehat{\phi}_j$:
\be\label{eq100} \lambda \widehat{\phi}_i+\widehat{h}\sum^{4n}_{j=1} \epsilon_{ij}K(x_i,x_j)\widehat{\phi}_j=w(x_i),
\quad i=1,\ldots,4n,\ee
where
\be \label{eq101}
\epsilon_{ij}=\begin{cases}8\quad &\text{if\ $\big|i-j-2\big|$\ divisible by $4$,}\\
-2\quad &\text{if\ $\big|i-j-1\big|$\ divisible by $2$,} \\
0\quad &\text{otherwise.}\end{cases}
\ee
Note that $\epsilon_{ii}=0$ for all $i$, which means that $K(x_i,x_i)$ is avoided.
The linear equations in \eqref{eq100} can be rewritten in the form
\begin{align}
&\sum^{2n}_{j=1}\widehat{K}_{ij}\widehat{\phi}_j=w(x_i),\quad i=1,\ldots,4n,\\
&\widehat{K}_{ij}=\epsilon_{ij}\widehat{h}K(x_i,x_j) +\lambda\delta_{ij}.
\label{eq103}\end{align}
Here $\delta_{ij}$ stands for the Kronecker delta.\\

\noindent{\bf Remark:} Note that if we were to use  either of the quadrature formulas $\widehat{T}{}^{(0)}_{3,n}[K(x_i,\cdot)\phi]$ or $\widehat{T}{}^{(1)}_{3,n}[K(x_i,\cdot)\phi]$ instead of $\widehat{T}{}^{(2)}_{3,n}[K(x_i,\cdot)\phi]$, we would have to know  the first and third  derivatives (with respect to $x$) of $U(x_i,x)\phi(x)$ at $x=x_i$, which implies that we must have knowledge of $\phi'(x)$, $\phi''(x)$, and $\phi'''(x)$. Of course, one may think that this is problematic since  $\phi(x)$ is the unknown function that we are trying to determine.
The quadrature formula  $\widehat{T}{}^{(2)}_{3,n}[K(x_i,\cdot)\phi]$ has no such problem since it relies only on integrand values.  We take up this issue in our next (``advanced'') approach.

\subsection{The ``advanced'' approach} \label{sse63}
In view of the fact that, for $m=1,2,3,$  all approximations $\widehat{T}{}^{(s)}_{m,n}[f]$ converge to $I[f]$ as $n\to\infty$ at the {\em same} rate when $f(z)$ is analytic and $T$-periodic in the strip $|\Im z|<\sigma$, we may want to keep the number of abscissas in  $\widehat{T}{}^{(s)}_{m,n}[f]$ to a minimum. We  can achieve this goal for $m=3$, for example,  by using $\widehat{T}{}^{(0)}_{3,n}[f]$, which requires only $n$ abscissas, unlike the $4n$ abscissas required by $\widehat{T}{}^{(2)}_{3,n}[f]$. We  apply this approach to $\intBar^b_a K(t,x)\phi(x)\,dx$ next.

With $f(t,x)=K(t,x)\phi(x)=U(t,x)\phi(x)/(x-t)^3$ and \eqref{eq5a},
we have
$$ f(t,x)=\frac{g(t,x)}{(x-t)^3},\quad g(t,x)=U(t,x)\phi(x).$$
Letting
$$ g_k(t,x)=\frac{\partial^k}{\partial x^k}g(t,x), \quad U_k(t,x)=\frac{\partial^k}{\partial x^k}U(t,x),$$
we  have
$$ g_k(t,x)=\sum^k_{p=0}\binom{k}{p}U_{k-p}(t,x)\phi^{(p)}(x),\quad k=0,1,2,\ldots, $$
where  $\phi^{(k)}(x)$ is the $k^{\text{th}}$ derivative of $\phi(x)$.
Therefore, by \eqref{eqT30}, we have
$$\widehat{T}^{(0)}_{3,n}[K(t,\cdot)\phi]=h\sum^{n-1}_{j=1}K(t,t+jh)\phi(t+jh)
-\frac{\pi^2}{3}g_1(t,t)h^{-1}+\frac{1}{6}g_3(t,t)h,$$
which, after some simple manipulation, can be written in the form
\be\label{eqcvb}  \widehat{T}^{(0)}_{3,n}[K(t,\cdot)\phi]=h\sum^{n-1}_{j=1}K(t,t+jh)\phi(t+jh)
+\sum^3_{k=0}A_k(t,h)\phi^{(k)}(t),\ee where
\be\label{eqcvb1}\begin{split}
 A_0(t,h)&=-\frac{\pi^2}{3}U_1(t,t)h^{-1}+\frac{1}{6}U_3(t,t)h, \\
A_1(t,h)&=-\frac{\pi^2}{3}U_0(t,t)h^{-1}+\frac{1}{2}U_2(t,t)h, \\
A_2(t,h)&=\frac{1}{2}U_1(t,t)h,  \\
 A_3(t,h)&=\frac{1}{6}U_0(t,t)h. \end{split}\ee
The  unknown quantities here  are $\phi^{(j)}(x)$, $j=0,1,2,3$.
We can take care of $\phi^{(j)}(x)$, $j=1,2,3,$ as follows:
We first construct the trigonometric interpolation polynomial
$Q_n(x)$ for $\phi(x)$  over the set of (equidistant) abscissas $\{x_0,x_1,\ldots,x_{n-1}\}$ already used for  constructing $\widehat{T}^{(0)}_{3,n}[K(t,\cdot)\phi]$; therefore, $Q_n(x_j)=\phi(x_j)$, $j=0,1,\ldots,n-1$.
Now,  since $\phi(x)$ is $T$-periodic and  infinitely differentiable on $\mathbb{R}$,   it is known that $Q_n(x)$ converges to $\phi(x)$ over $[a,b]$ with spectral accuracy. Similarly, for each $k$,  $Q_n^{(k)}(x)$,  the $k^{\text{th}}$ derivative of $Q_n(x)$, converges to $\phi^{(k)}(x)$ over $[a,b]$ with spectral accuracy and at the {\em same} rate. Now, with
$x_j=a+jT/n,$  $Q_n(x)$ is  of the form
$$ Q_n(x)=\sum^{n-1}_{j=0}D_n(x-x_j)\phi(x_j),\quad D_n(x_s-x_j)=\delta_{sj},$$ where
$$ D_n(y)=\frac{1}{n}\,\sin \frac{n\pi y}{T}\,\cot\frac{\pi y}{T},\quad
\text{when $n$ is an even integer.}\footnote{There is a similar result for odd $n$, which we omit. What is important here is the main idea.}$$  Taking $t\in \{x_0,x_1,\ldots,x_{n-1}\}$, we thus have
$$\phi(t)=Q_n(t);\quad \phi^{(k)}(t)\approx Q_n^{(k)}(t)=\sum^{n-1}_{j=0}D_n^{(k)}(t-x_j)\phi(x_j),\quad k=1,2,\ldots.$$
Letting   now $t=x_i$ and $\widehat{\phi}_i\approx \phi(x_i)$, we can replace the integral equation in \eqref{eq4a} by the following set of $n$ equations for the $n$ unknown  $\widehat{\phi}_i$:

\begin{align*} \lambda \widehat{\phi}_i&+h\sum^{n-1}_{\substack{j=0\\ j\neq i}} K(x_i,x_j)\widehat{\phi}_j +A_0(x_i,h)\widehat{\phi}_i \notag\\
&+\sum^{n-1}_{j=0}\bigg[\sum^3_{k=1}A_k(x_i,h)D_n^{(k)}(x_i-x_j)\bigg]\widehat{\phi}_j
=w(x_i),\quad i=0,1,\ldots,n-1,\end{align*}
where we have used the fact that

$$f(x_i+k{h})=f(a+(i+k){h})=\begin{cases} f(x_{i+k})&\quad\text{if $i+k\leq n-1$}\\
f(x_{i+k-n})&\quad \text{if $i+k\geq  n$}\end{cases}$$
when $f(x)$ is $T$-periodic and  $k\leq n-1$.
Finally, these equations can be rewritten in the form
\be
\sum^{n-1}_{j=0}\widehat{K}_{ij}\widehat{\phi}_j=w(x_i),\quad i=0,1,\ldots,n-1,\ee
\be \widehat{K}_{ij}=[\lambda+A_0(x_i,h)]\delta_{ij}+hK(x_i,x_j)(1-\delta_{ij})
+\sum^3_{k=1}A_k(x_i,h)D_n^{(k)}(x_i-x_j).\ee

We note that the idea of using trigonometric interpolation was introduced originally by Kress \cite{Kress:1995:NSH} in connection with the numerical solution of hypersingular integral equations. Needless to say, it can be used for all the singular integral equations with kernels having  singularities
of the form $(x-t)^{-m}$ with arbitrary integers $m\geq1$.

%\bibliographystyle{plain}
%\bibliography{../tex/bib/references}

\begin{thebibliography}{10}

\bibitem{Davis:1984:MNI}
P.J. Davis and P.~Rabinowitz.
\newblock {\em {Methods of Numerical Integration}}.
\newblock Academic Press, New York, second edition, 1984.

\bibitem{Evans:1993:PNI}
G.~Evans.
\newblock {\em {Practical Numerical Integration}}.
\newblock Wiley, New York, 1993.

\bibitem{Huang:2013:AEE}
Jin Huang, Zhu Wang, and Rui Zhu.
\newblock Asymptotic error expansions for hypersingular integrals.
\newblock {\em Adv. Comput. Math.}, 38:257--279, 2013.

\bibitem{Kaya:1987:SIE}
A.C. Kaya and F.~Erdogan.
\newblock On the solution of integral equations with strongly singular kernels.
\newblock {\em Quart. Appl. Math.}, 45:105--122, 1987.

\bibitem{Kress:1995:NSH}
R.~Kress.
\newblock On the numerical solution of a hypersingular integral equation in
  scattering theory.
\newblock {\em J. Comp. Appl. Math.}, 61:345--360, 1995.

\bibitem{Krommer:1998:CI}
A.R. Krommer and C.W. Ueberhuber.
\newblock {\em {Computational Integration}}.
\newblock SIAM, Philadelphia, 1998.

\bibitem{Kythe:2005:HCM}
P.K. Kythe and M.R. Sch{\"{a}}ferkotter.
\newblock {\em {Handbook of Computational Methods for Integration}}.
\newblock Chapman \& Hall/CRC Press, New York, 2005.

\bibitem{Li:2010:NCR}
Buyang Li and Weiwei Sun.
\newblock {Newton-Cotes} for {Hadamard} finite-part integrals on an interval.
\newblock 30:1235--1255, 2010.

\bibitem{Li:2010:SUN}
Jin Li, Xiaoping Zhang, and Dehao Yu.
\newblock Superconvergence and ultraconvergence of {Newton-Cotes} rules for
  supersingular integrals.
\newblock {\em J. Comp. Appl. Math.}, 233:2841--2854, 2010.

\bibitem{Lifanov:2004:HIE}
I.K. Lifanov, L.N. Poltavskii, and G.M. Vainikko.
\newblock {\em Hypersingular Integral Equations and their Applications}.
\newblock CRC Press, New York, 2004.

\bibitem{Luke:1969:SFA1}
Y.L. Luke.
\newblock {\em The {Special} {Functions} and {Their} {Approximations}},
  volume~I.
\newblock Academic Press, New York, 1969.

\bibitem{Lyness:2005:AET}
J.N. Lyness and G.~Monegato.
\newblock Asymptotic expansions for two-dimensional hypersingular integrals.
\newblock {\em Numer. Math.}, 100:293--329, 2005.

\bibitem{Monegato:1994:NEH}
G.~Monegato.
\newblock Numerical evaluation of hypersingular integrals.
\newblock {\em J. Comp. Appl. Math.}, 50:9--31, 1994.

\bibitem{Monegato:2009:DPA}
G.~Monegato.
\newblock Definitions, properties and applications of finite-part integrals.
\newblock {\em J. Comp. Appl. Math.}, 229:425--439, 2009.

\bibitem{Navot:1961:EEM}
I.~Navot.
\newblock An extension of the {Euler--Maclaurin} summation formula to functions
  with a branch singularity.
\newblock {\em J. Math. and Phys.}, 40:271--276, 1961.

\bibitem{Navot:1962:FEE}
I.~Navot.
\newblock A further extension of the {Euler--Maclaurin} summation formula.
\newblock {\em J. Math. and Phys.}, 41:155--163, 1962.

\bibitem{Olver:2010:NIST}
F.W.J. Olver, D.W. Lozier, R.F. Boisvert, and {C.W. Clark, editors}.
\newblock {\em {NIST Handbook of Mathematical Functions}}.
\newblock Cambridge University Press, Cambridge, 2010.

\bibitem{Sidi:2003:PEM}
A.~Sidi.
\newblock {\em Practical Extrapolation Methods: Theory and Applications}.
\newblock Number~10 in Cambridge Monographs on Applied and Computational
  Mathematics. Cambridge University Press, Cambridge, 2003.

\bibitem{Sidi:2012:EME-P1}
A.~Sidi.
\newblock {Euler--Maclaurin} expansions for integrals with arbitrary algebraic
  endpoint singularities.
\newblock {\em Math. Comp.}, 81:2159--2173, 2012.

\bibitem{Sidi:2012:EME-P2}
A.~Sidi.
\newblock {Euler--Maclaurin} expansions for integrals with arbitrary
  algebraic-logarithmic endpoint singularities.
\newblock {\em Constr. Approx.}, 36:331--352, 2012.

\bibitem{Sidi:2013:CNQ}
A.~Sidi.
\newblock Compact numerical quadrature formulas for hypersingular integrals and
  integral equations.
\newblock {\em J. Sci. Comput.}, 54:145--176, 2013.

\bibitem{Sidi:2014:AES}
A.~Sidi.
\newblock Analysis of errors in some recent numerical quadrature formulas for
  periodic singular and hypersingular integrals via regularization.
\newblock {\em Appl. Numer. Math.}, 81:30--39, 2014.

\bibitem{Sidi:2014:RES}
A.~Sidi.
\newblock Richardson extrapolation on some recent numerical quadrature formulas
  for singular and hypersingular integrals and its study of stability.
\newblock {\em J. Sci. Comput.}, 60:141--159, 2014.

\bibitem{Sidi:2019:SSI-P2}
A.~Sidi.
\newblock Exactness and convergence properties of some recent numerical
  quadrature formulas for supersingular integrals of periodic functions.
\newblock Technical report, Computer Science Dept., Technion--Israel Institute
  of Technology, 2019.

\bibitem{Sidi:1988:QMP}
A.~Sidi and M.~Israeli.
\newblock Quadrature methods for periodic singular and weakly singular
  {Fredholm} integral equations.
\newblock {\em J. Sci. Comput.}, 3:201--231, 1988.
\newblock Originally appeared as Technical Report No. 384, Computer Science
  Dept., Technion--Israel Institute of Technology, (1985), and also as ICASE
  Report No. 86-50 (1986).

\bibitem{Wu:2010:SCM}
Jiming Wu, Zihuan Dai, and Xiaoping Zhang.
\newblock The superconvergence of the composite midpoint rule for the
  finite-part integral.
\newblock {\em J. Comp. Appl. Math.}, 233:1954--1968, 2010.

\bibitem{Wu:2008:SNC}
Jiming Wu and Weiwei Sun.
\newblock The superconvergence of {Newton--Cotes} rules for the {Hadamard}
  finite-part integral on an interval.
\newblock {\em Numer. Math.}, 109:143--165, 2008.

\bibitem{Zeng:2014:NCQ}
Guang Zeng, Li~Lei, and Jin Huang.
\newblock A new construction of quadrature formulas for {Cauchy} singular
  integral.
\newblock {\em J. Comput. Anal. Appl.}, 17:426--436, 2014.

\bibitem{Zhang:2009:SCS}
Xiaoping Zhang, Jiming Wu, and Dehao Yu.
\newblock Superconvergence of the composite {Simpson's} rule for a certain
  finite-part integral and its applications.
\newblock {\em J. Comp. Appl. Math.}, 223:598--613, 2009.

\end{thebibliography}

\end{document}